\documentclass{amsart}
\usepackage{amssymb}
\usepackage{hyperref}
\usepackage{url}
\usepackage[all]{xy}
\usepackage{tikz}
\usepackage[none]{hyphenat}
\usepackage{comment}
\usepackage[OT2,OT1]{fontenc}

\DeclareRobustCommand\cyr{%
  \renewcommand\rmdefault{wncyr}%
  \renewcommand\sfdefault{wncyss}%
  \renewcommand\encodingdefault{OT2}%
  \normalfont
  \selectfont}
\DeclareTextFontCommand{\textcyr}{\cyr}

%%%%%%%%%%%%%%%%%%%
%\usepackage{showlabels}
%%%%%%%%%%%%%%%%%%%%

\usetikzlibrary{matrix,arrows,decorations.pathmorphing}

\newtheorem{thm}{Theorem}

\newtheorem{lem}[thm]{Lemma}
\newtheorem{cor}[thm]{Corollary}

\newtheorem{prop}[thm]{Proposition}

\theoremstyle{definition}
\newtheorem{defn}[thm]{Definition}

\newtheorem{rem}[thm]{Remark}

\newcommand{\RN}[1]{%
  \textup{\uppercase\expandafter{\romannumeral#1}}%
}

\numberwithin{thm}{section}

\def\Q{\mathbf{Q}}
\def\F{\mathbf{F}}

\def\Fp{\F_p}
\def\Ftwo{\F_2}

\def\cP{\mathcal{P}}

\def\cP{\mathcal{P}}

    \DeclareFontFamily{U}{wncy}{}
    \DeclareFontShape{U}{wncy}{m}{n}{<->wncyr10}{}
    \DeclareSymbolFont{mcy}{U}{wncy}{m}{n}
    \DeclareMathSymbol{\Sh}{\mathord}{mcy}{"58}

\def\q{\mathfrak{q}}

\def\d{\mathfrak{d}}

\def\Hom{\mathrm{Hom}}
\def\Gal{\mathrm{Gal}}

\def\rk{\mathrm{rk}}

\def\Sel{\mathrm{Sel}}

\def\Frob{\mathrm{Frob}}

\def\res{\mathrm{res}}

\def\dimp{\dim_{\Fp}}
\def\dimtwo{\dim_{\Ftwo}}

\def\too{\longrightarrow}

\def\dirsum#1{\underset{#1}{\textstyle\bigoplus}}

\def\d2{\dim_{\Ftwo}}

%%%%%%%%%%%%%%%%%%%%%%%%%%%%%%%%%%%%%%%%%%%%%%%%%

\title[Large Shafarevich-Tate groups]
    {Large Shafarevich-Tate groups over quadratic number fields}
%%\thanks{$^\star$This material is based upon work supported by the National Science Foundation under grant DMS-1065904.}
\author{Myungjun Yu}
\address{Department of Mathematics,
University of Michigan, Ann Arbor, MI 48109-1043,
USA}

\email{\href{mailto:myungjuy@umich.edu}{myungjuy@umich.edu}}

\begin{document}

\begin{abstract}
Let $E$ be an elliptic curve over the rational field $\Q$. Let $K$ be a quadratic extension over $\Q$. We show that (under mild conditions on $E$) for every $r>0$, there are infinitely many quadratic twists $E^d/\Q$ of $E/\Q$ such that $\dimtwo(\Sh(E^d/K)[2]) > r$.
\end{abstract}

\maketitle
\sloppy

\section{Introduction}
  Let $E$ be an elliptic curve over a number field $F$. The group of rational points $E(F)$ is known to be a finitely generated abelian group and its rank is called the \emph{Mordell-Weil rank}, denoted by $\rk(E(F))$. The Mordell-Weil rank is one of the most interesting and misterious invariants in the study of elliptic curves. In general, it is very difficult to compute the Mordell-Weil rank, and we often study various \emph{Selmer groups}, which give upper bounds for the Mordell-Weil rank. These Selmer groups also contain the information of another very important object, called the \emph{Shafarevich-Tate group}. Precisely, one can define the Shafarevich-Tate group of $E/F$ as follows.
$$
\Sh(E/F) := \ker\left(H^1(F, E(\overline{F})) \to \prod_v H^1(F_v, E(\overline{F_v}))\right),
$$
where $v$ varies over the places of $F$. 

  It is conjectured that $\Sh(E/F)$ is finite, but one may wonder whether the order of $\Sh(E/F)$ can be arbitrarily large. Since it is often easier to study its $p$-torsion part (for a prime $p$) instead, a natural question one can ask is: ``can $\Sh(E/F)[p]$ be arbitrarily large?"

 Cassels \cite{Cassels} first showed the unboundedness of $\dim_{\F_{3}}(\Sh(E/\Q)[3])$ as $E$ varies over elliptic curves over $\Q$. Showing the unboundedness of $\dim_{\Fp}(\Sh(E/\Q)[p])$ has been successful for some primes including $2,3,5,7$ and $13$. For related results, see \cite{2}, \cite{3}, \cite{4}, and \cite{5}. They use the fact that there are infinitely many elliptic curves over $\Q$ with rational $p$-isogenies, so this idea doesn't seem to work for all primes $p$. 
 
 If $F$ is allowed to vary (with bounded degree over $\Q$), Kloosterman \cite{Kloosterman} showed that for any $p$, $\dimp(\Sh(E/F)[p])$ is unbounded (as $E$ also varies). Clark and Sharif \cite{ClarkSharif} proved that for any elliptic curve $E/\Q$, $\dimp(\Sh(E/F)[p])$ is unbounded when $F$ varies over degree $p$ (not necessarily Galois) extensions over $\Q$.

 If $F$ is a fixed Galois extension of degree $p$ over $\Q$, Matsuno \cite{Matsuno} proved that  $\dimp(\Sh(E/F)[p])$ is unbounded as  $E$ varies over elliptic curves over $\Q$. It is important to note that the unboundedness of $\dimp(\Sh(E/\Q)[p])$ does not imply that of $\dimp(\Sh(E/F)[p])$ since the kernel of the restriction map $$\Sh(E/\Q)[p] \to \Sh(E/F)[p]$$ could be also large in size under the condition that $[F : \Q] = p$. We give a further discussion of Matsuno's result when $p=2$ in Remark \ref{Matsunop2}.

 Now let  $D$ be a squrefree integer and let $K = \Q(\sqrt{D})$ be a quadratic number field. For a squarefree integer $d$, we write $E^d$ for the quadratic twist of $E$ by $\Q(\sqrt{d})/\Q$. In this paper, we improve Matsuno's theorem when $p=2$ by showing $\dimtwo(\Sh(E^d/K)[2])$ is unbounded as $E^d$ varies over quadratic twists of a fixed elliptic curve $E/\Q$ (under mild conditions on $E$). 
      
More precisely, the main theorem of this paper is 
\begin{thm}
\label{main}
Let $E$ be an elliptic curve over $\Q$ with no non-trivial rational $2$-torsion point. Let $\Delta$ be the discriminant of a model of $E$. If $K \neq \Q(\sqrt{\Delta})$, then for every $r > 0$, there exist infinitely many quadratic twists $E^d$ of $E$ over $\Q$ such that 
$$
\dimtwo\left(\Sh(E^d/K)[2]\right) - \dimtwo\left(\Sh(E^d/\Q)[2]\right) > r.
$$
\end{thm}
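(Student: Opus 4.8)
The plan is to translate the statement into one about $2$-Selmer groups and then to manufacture the required twists $E^d$ as products of Chebotarev primes.

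\emph{Reduction to Selmer groups.} For a squarefree integer $m$ the curves $E^m$ and $E$ have the same $2$-division polynomial; since $E(\Q)[2]=0$ this is an irreducible cubic, so it has no root in $\Q$ nor in the quadratic field $K$, whence $E^m(\Q)[2]=E^m(K)[2]=0$. Thus $E^m(F)/2E^m(F)\cong(\Z/2)^{\rk E^m(F)}$ for $F\in\{\Q,K\}$, and the descent sequence $0\to E^m(F)/2E^m(F)\to\Sel_2(E^m/F)\to\Sh(E^m/F)[2]\to 0$ gives $\dimtwo\Sh(E^m/F)[2]=\dimtwo\Sel_2(E^m/F)-\rk E^m(F)$. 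Combining this with $\rk E^d(K)=\rk E^d(\Q)+\rk E^{dD}(\Q)$, the quantity in the theorem equals
\[
\dimtwo\Sel_2(E^d/K)-\dimtwo\Sel_2(E^d/\Q)-\rk E^{dD}(\Q).
\]
So it suffices to produce, for every $n$, infinitely many squarefree $d$ with $n$ prime factors of a prescribed type such that $\dimtwo\Sel_2(E^d/K)\ge n-c_1$ while $\dimtwo\Sel_2(E^d/\Q)\le c_2$ and $\dimtwo\Sel_2(E^{dD}/\Q)\le c_2$, for constants $c_1,c_2$ depending only on $E$ and $K$ (the last bound also forces $\rk E^{dD}(\Q)\le c_2$).

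\emph{The twisting primes.} Let $M=\Q(\sqrt D)\cdot\Q(E[2])$. Since $E(\Q)[2]=0$, the group $G=\Gal(\Q(E[2])/\Q)$ is either $\Z/3$ or all of $\GL_2(\Ftwo)\cong S_3$; it has a quadratic subfield exactly in the second case, and that subfield is $\Q(\sqrt\Delta)$ (in the first case $\Delta$ is a square). The hypothesis $K\ne\Q(\sqrt\Delta)$ means precisely that $\Q(\sqrt D)\not\subseteq\Q(E[2])$, so $\Gal(M/\Q)\to\Gal(K/\Q)\times G$ is surjective. By Chebotarev there are then infinitely many primes $\ell$, of good reduction for $E$ and coprime to $2D$, whose Frobenius in $\Gal(M/\Q)$ is prescribed so that $E$ has strictly more $2$-torsion over $K\otimes_\Q\Q_\ell$ than over $\Q_\ell$ (concretely: either $\ell$ is inert in $K$ and $\Frob_\ell$ acts on $E[2]$ as a transposition, or $\ell$ splits in $K$ and splits completely in $\Q(E[2])$). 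Any product $d=\ell_1\cdots\ell_n$ of distinct such primes is an admissible twist parameter, and $dD$ is then squarefree as well.

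\emph{Selmer growth over $K$, control over $\Q$, and the main difficulty.} For such an $\ell$ the twist $E^d$ has additive reduction at $\ell$, and one checks that the local Kummer condition $\im\!\big(E^d(K_w)/2\to H^1(K_w,E[2])\big)$ at the place(s) $w\mid\ell$ of $K$ is "larger" than its counterpart over $\Q_\ell$: in the Greenberg--Wiles (Poitou--Tate) formula the local defect $\sum_{w\mid\ell}\big(\dimtwo H^1(K_w,E[2])-\dimtwo\mathcal{L}_w\big)$ over $K$ exceeds the corresponding defect over $\Q$ by at least $1$ for each $\ell_i$, so after relaxing the Selmer conditions at $w_1,\dots,w_n$ there is at least $n$ more room over $K$ than over $\Q$. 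To turn this into an honest bound $\dimtwo\Sel_2(E^d/K)\ge n-c_1$ one adds the primes $\ell_1,\dots,\ell_n$ one at a time, imposing at each stage a further Chebotarev condition on the new prime so that its local condition over $K$ raises, rather than lowers, the Selmer dimension (a Mazur--Rubin / Klagsbrun--Mazur--Rubin style induction), and simultaneously imposing additional Frobenius constraints that keep $\dimtwo\Sel_2(E^d/\Q)$ and $\dimtwo\Sel_2(E^{dD}/\Q)$ bounded along the family. The main obstacle is exactly this simultaneous bookkeeping: one must identify the local conditions at the $\ell_i$ over both $K$ and $\Q$ precisely enough to see that the Frobenius conditions forcing growth over $K$ are compatible with those forcing boundedness over $\Q$, and it is here that $K\ne\Q(\sqrt\Delta)$ — which provides the room in $\Gal(M\cdot(\text{Selmer field})/\Q)$ to reconcile the two requirements — is essential. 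Granting this, for $n>r+c_1+2c_2$ the displayed quantity exceeds $r$, and letting the prime set range over all choices produces infinitely many such $d$.
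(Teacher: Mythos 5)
Your reduction to Selmer groups (via the descent sequence and $\rk E^d(K)=\rk E^d(\Q)+\rk E^{dD}(\Q)$) and your use of $K\neq\Q(\sqrt\Delta)$ to get $\Q(E[2])\cap K=\Q$ both match the paper. But the heart of the theorem --- an actual lower bound on $\dimtwo\Sel_2(E^d/K)$ --- is exactly the step you defer (``Granting this\dots''), and the route you sketch for it is not the one that works. Your proposal is to run a Mazur--Rubin-style induction over $K$ forcing Selmer growth, simultaneously with Frobenius constraints keeping $\Sel_2(E^d/\Q)$ and $\Sel_2(E^{dD}/\Q)$ bounded; none of this bookkeeping is carried out, and the ``local defect'' heuristic you invoke does not by itself bound the Selmer group from below (the Poitou--Tate/Greenberg--Wiles identity controls the \emph{relaxed} Selmer group $\Sel_2^S$, and the honest Selmer group inside it can still be small). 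So as written there is a genuine gap precisely where you say the main obstacle lies.

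Moreover, your choice of twisting primes is the wrong one for the mechanism that makes the growth over $K$ \emph{automatic}, which is the paper's key point. The paper takes primes $q$ with $\Frob_q=1$ on $E[2]$ (so $\dimtwo E(\Q_q)[2]=2$, i.e.\ $q\in\cP_{E,2}$) \emph{and} inert in $K$; after twisting, $E^d$ has additive reduction at $q$ and the restriction kernel $W_{q,K}=\ker\bigl(H^1(\Q_q,E)\to H^1(K_\q,E)\bigr)\cong\Hom\bigl(\Gal(K_\q/\Q_q),E[2]\bigr)$ is $2$-dimensional (Lemma \ref{Wv}). Matsuno's argument (Proposition \ref{SeloverK}) then gives $\dimtwo\Sel_2(E^d/K)\ge 2|A_{E^d}|$ unconditionally: the preimages of the local conditions over $K$ fill up all of $H^1(\Q_q,E[2])$ at these primes, and the resulting relaxed Selmer group over $\Q$ injects into $\Sel_2(E^d/K)$ since $E[2]^{G_K}=0$. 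No induction over $K$ is needed; only the boundedness of $\Sel_2(E^d/\Q)$ requires a Chebotarev/Mazur--Rubin argument (Lemma \ref{selmerbound}, Proposition \ref{Ewithmanyaddred}), and that happens entirely over $\Q$. With your ``inert in $K$ and $\Frob_\ell$ a transposition'' primes one has $H^1(\Gal(K_\q/\Q_q),E[2])=0$, so this mechanism yields nothing, and your split option gives $W_{\ell,K}=0$ outright. Finally, for the rank condition the paper does not bound a third Selmer group: it invokes the theorem of [HL] plus Kolyvagin to produce a further twist $d$ with at most $4$ prime factors, $\rk E^{dD}(\Q)=0$, and $d$ a local square at all bad places, so that the earlier Selmer control survives via Corollary \ref{corofptd}. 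You should replace your transposition primes by $\cP_{E,2}$-primes inert in $K$ and use the local restriction kernels; otherwise the lower bound over $K$ does not follow.
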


As an immediate consequence, we have
\begin{cor}
\label{coro}
Let $E/\Q$, $K$, and $r$ be as in the previous theorem. Then there exist infinitely many quadratic twists $E^d$ of $E$ over $\Q$ such that
$$
\dimtwo\left(\Sh(E^d/K)[2]\right) > r.
$$ 
\end{cor}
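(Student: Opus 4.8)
The plan is to deduce the corollary directly from Theorem \ref{main}, whose conclusion is a strictly stronger inequality. The strategy has no moving parts beyond discarding a manifestly non-negative term, which is exactly why the excerpt advertises the corollary as ``an immediate consequence.''

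Concretely, I would proceed as follows. First, since $E/\Q$, $K$, and $r$ are taken to satisfy the hypotheses of Theorem \ref{main} (in particular $E$ has no non-trivial rational $2$-torsion point and $K \neq \Q(\sqrt{\Delta})$), the theorem supplies infinitely many quadratic twists $E^d/\Q$ for which
\[
\dimtwo\left(\Sh(E^d/K)[2]\right) - \dimtwo\left(\Sh(E^d/\Q)[2]\right) > r.
\]
Second, I would observe that $\Sh(E^d/\Q)[2]$ is a finite-dimensional vector space over $\Ftwo$, so its dimension is a non-negative integer; subtracting it can only decrease the left-hand side, whence
\[
\dimtwo\left(\Sh(E^d/K)[2]\right) \;\geq\; \dimtwo\left(\Sh(E^d/K)[2]\right) - \dimtwo\left(\Sh(E^d/\Q)[2]\right) \;>\; r.
\]
Because this holds for every twist $E^d$ in the infinite family produced by the theorem, the very same family witnesses the corollary.

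There is no genuine obstacle here: the entire arithmetic content---the construction of a family of twists along which the $2$-rank of the Shafarevich--Tate group over $K$ grows without bound---is carried out in the proof of Theorem \ref{main}. The only point that deserves a word of care is that the family must be precisely the one furnished by the theorem, so that the relative growth bound (and hence the absolute bound derived from it) applies to each member. In particular, no independent estimate on $\dimtwo(\Sh(E^d/\Q)[2])$ is required, since the argument uses only its non-negativity.
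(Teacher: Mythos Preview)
Your proposal is correct and matches the paper's approach exactly: the paper states the corollary as ``an immediate consequence'' of Theorem~\ref{main} and gives no separate proof, and your argument---dropping the non-negative term $\dimtwo(\Sh(E^d/\Q)[2])$---is precisely the intended one-line deduction.
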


\begin{rem}
In an appendix to \cite{HL}, Rohrlich proved that for every $r > 0$, there exists $E^d/\Q$ such that 
$$
\dimtwo\left(\Sh(E^d/\Q)[2]\right) > r. 
$$
Note that the above corollary is not a direct consequence of Rohrlich's theorem since the kernel of the restriction map
$$
\Sh(E^d/\Q)[2] \to \Sh(E^d/K)[2]
$$
can be also large as mentioned above. 
\end{rem}

\begin{rem}
\label{Matsunop2}
What Matsuno \cite{Matsuno} proved in the case $p=2$ is as follows. Let $A/\Q$ be an elliptic curve defined by the equation
$$
y^2 + xy = x^3 + 8mx^2 + lmx,
$$
with certain restrictions on the prime divisors of $l$ and $m$. Then Corollary \ref{coro} holds when $E$ is replaced by $A$. 
\end{rem}

\subsection{Strategy of the proof} 
For a number field $F$ and an elliptic curve $E/F$, we have a short exact sequence
\begin{equation}
\label{2exact}
0 \too E(F)/2E(F) \too \Sel_2(E/F) \too \Sh(E/F)[2] \too 0. 
\end{equation}
It follows that
\begin{equation}
\label{dimsel}
\dimtwo\left(\Sel_2(E/F)\right) = \rk\left(E(F)\right) + \dimtwo\left(E(F)[2]\right) + \dimtwo\left(\Sh(E/F)[2]\right). 
\end{equation}

We will construct $E^d$ so that the following conditions are all satisfied. 
\begin{enumerate}
\item
$\rk\left(E^d(\Q)\right) = \rk\left(E^d(K)\right)$,
\item
$\dimtwo\left(\Sel_2(E^d/\Q)\right) < a$ for a fixed constant $a$,
\item
$\dimtwo\left(\Sel_2(E^d/K)\right) \gg 0$ .  
\end{enumerate}
Then, by \eqref{dimsel}, (i) and (ii), we have  $\rk(E^d(K)) < a$ and $\dimtwo(\Sh(E^d/\Q)[2]) < a$. Therefore it follows from \eqref{dimsel} and (iii) that
$$
\dimtwo\left(\Sh(E^d/K)[2]\right) - \dimtwo\left(\Sh(E^d/\Q)[2]\right)  \gg 0.
$$

%%%%%%%%%%%%%%%%%%%%%%%%%%%%%%%%%%%%%%%%%%

\section{Selmer ranks over $K$}
Let $E$ be an elliptic curve over $\Q$ with no non-trivial rational $2$-torsion point. We write $q$ for a prime of $\Q$. Let $\Q_q$ denote the $q$-adic completion of $\Q$. 

\begin{defn}
We define
$$W_{q,K} := \ker\left(H^1(\Q_q, E(\overline{\Q_q}))    \to    H^1(K_\q, E(\overline{\Q_q})\right),$$ 
where $\q$ is a prime of $K$ above $q$ and $K_\q$ is the completion of $K$ at $\q$. 
\end{defn}

\begin{lem}
\label{Wv}
Let $q$ be an odd prime such that $\dimtwo\left(E(\Q_q)[2]\right) = 2$. Suppose $E$ has additive reduction at $q$ and suppose $q$ is inert in $K/\Q$. Then 
\begin{enumerate}
\item
$E(K_\q)[2^\infty] = E(K_\q)[2] = E[2]$ and  
\item
$\dimtwo(W_{q, K}) = 2$. 
\end{enumerate}
\end{lem}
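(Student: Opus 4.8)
The plan is to establish assertion (i) first and then to feed it into an inflation--restriction computation for (ii).

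For (i), note that the hypothesis $\dimtwo(E(\Q_q)[2]) = 2$ says exactly that $E[2] \subseteq E(\Q_q) \subseteq E(K_\q)$, so $E(K_\q)[2] = E[2]$ is automatic and the only real content is that $E(K_\q)$ has no point of order $4$. I would prove this via the N\'eron model. Since $q$ is inert, $K_\q/\Q_q$ is the unramified quadratic extension, so $E$ keeps additive reduction over $K_\q$, the N\'eron model base changes to the N\'eron model over $\O_{K_\q}$, and the identity component of the special fibre becomes the additive group over $\F_{q^2}$, whose group of points has (odd) order $q^2$; combined with the fact that the kernel of reduction is a pro-$q$ group, this shows the subgroup $E^0(K_\q)$ of points with nonsingular reduction has trivial $2$-power torsion. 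Hence the component-group sequence
$$
0 \too E^0(K_\q) \too E(K_\q) \too \Phi(\F_{q^2}) \too 0
$$
embeds $E(K_\q)[2^\infty] \hookto \Phi(\F_{q^2}) \subseteq \Phi(\overline{\F_q})$ into the geometric component group. By the Kodaira--N\'eron classification $|\Phi(\overline{\F_q})| \le 4$ at a place of additive reduction, and since $E[2] \cong (\Z/2)^2$ already embeds into $E(K_\q)[2^\infty]$, all inclusions are equalities; therefore $E(K_\q)[2^\infty] = E(K_\q)[2] = E[2]$.

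For (ii), inflation--restriction applied to the tower $\overline{\Q_q}/K_\q/\Q_q$ with $\Gal(K_\q/\Q_q) \cong \Z/2$ identifies $W_{q,K}$ with $H^1(\Z/2, E(K_\q))$. I would compute this Tate cohomology group by discarding the uniquely $2$-divisible pieces, which contribute nothing: the free quotient $E(K_\q)/E(K_\q)_{\tors} \cong \Z_q^2$ and the prime-to-$2$ part of $E(K_\q)_{\tors}$ are uniquely $2$-divisible (here $q$ odd is used again), so the long exact sequences in Tate cohomology of the canonical exact sequences of $\Z/2$-modules give $\widehat H^1(\Z/2, E(K_\q)) \cong \widehat H^1(\Z/2, E(K_\q)[2^\infty])$. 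By (i), $E(K_\q)[2^\infty] = E[2]$ with trivial $\Gal(K_\q/\Q_q)$-action, so $H^1(\Z/2, E[2]) = \Hom(\Z/2, E[2]) \cong E[2]$, which has $\Ftwo$-dimension $2$. This yields $\dimtwo(W_{q,K}) = 2$.

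The main obstacle is (i): ruling out $K_\q$-rational $4$-torsion genuinely uses that $q$ is odd, that the reduction is additive (so the identity component carries no $2$-torsion and the component group has order at most $4$), and that the base change to $K_\q$ is unramified so none of this structure is disturbed — each of these inputs fails without the hypotheses. One must also be slightly careful in (ii): the abstract decomposition $E(K_\q) \cong \Z_q^2 \oplus E(K_\q)_{\tors}$ need not be $\Gal(K_\q/\Q_q)$-equivariant, so the vanishing of the spurious cohomology should be read off from the canonical torsion and prime-to-$2$ filtrations together with $2$-divisibility rather than from an explicit splitting.
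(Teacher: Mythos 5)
Your proof is correct and follows essentially the same route as the paper: additive reduction persists over the unramified extension $K_\q/\Q_q$, the bound $[E(K_\q):E_0(K_\q)]\le 4$ together with the unique $2$-divisibility of $E_0(K_\q)$ pins down $E(K_\q)[2^\infty]=E[2]$, and inflation--restriction plus discarding uniquely $2$-divisible pieces identifies $W_{q,K}$ with $\Hom(\Gal(K_\q/\Q_q),E[2])$. Your closing caveat about the splitting of $E(K_\q)$ not being Galois-equivariant is a point the paper glosses over, and your filtration argument handles it correctly.
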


\begin{proof}
It follows from \cite[Proposition $\RN{7}$.5.4(a)]{Silverman} that $E$ has additive reduction over $K_\q$ . By \cite[Theorem $\RN{7}$.6.1]{Silverman}, we have $[E(K_\q) : E_0(K_\q)] \le 4.$  Since $E_0(K_\q)$ is divisible by $2$, (i) follows. We now show (ii). By the inflation-restriction sequence, we have 
$$W_{q, K} \cong H^1\left(K_\q/\Q_q, E(K_\q)\right) \cong H^1\left(K_\q/\Q_q, E(K_\q)[2^\infty]\right) \cong \Hom\left(K_\q/\Q_q, E[2]\right),$$ 
where the second isomorphism follows from decomposing $E(K_\q)$ into the direct sum of $2$-primary part and non $2$-primary part (note that $[K_\q:\Q_q] = 2$). Then (ii) easily follows. 
\end{proof}

Define $A_E$ to be the set of primes $q$ of $\Q$ satisfying the conditions of Lemma \ref{Wv}. We write $\Sel_2(E/K)$ for the classical $2$-Selmer group of $E/K$ (see Definition \ref{selmer}).

\begin{prop}
\label{SeloverK}
We have
$$
\dimtwo\left(\Sel_2(E/K)\right) \ge 2 |A_E|. 
$$
\end{prop}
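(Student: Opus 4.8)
The plan is to produce a large subspace of $\Sel_2(E/K)$ by exploiting the local conditions at the primes $q \in A_E$ together with the comparison between the global Selmer group and the ``relaxed'' Selmer group that is unramified outside a finite set. First I would recall the definition of $\Sel_2(E/K)$ as the subgroup of $H^1(K, E[2])$ consisting of classes whose localization at every place $w$ of $K$ lies in the image of the local Kummer map $\delta_w \colon E(K_w)/2E(K_w) \hookrightarrow H^1(K_w, E[2])$. For $q \in A_E$, Lemma \ref{Wv}(i) tells us $E(K_\q)[2] = E[2]$, so in particular $E$ acquires full $2$-torsion over $K_\q$; this makes the local image at $\q$ reasonably large, and more importantly the space $W_{q,K}$ computed in Lemma \ref{Wv}(ii) is $2$-dimensional. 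The key point is to feed these local degrees of freedom into a global class via a Poitou--Tate / global duality argument.

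The main step is to set up the exact sequence relating $\Sel_2(E/K)$, a relaxed Selmer group $\Sel^{S}(E/K)$ in which the local condition at the primes of $S = \{\q : q \in A_E\}$ is dropped (or rather replaced by all of $H^1(K_\q, E[2])$), and the dual Selmer group. Concretely, one has a four-term exact sequence coming from Poitou--Tate duality
$$
\Sel_2(E/K) \too \Sel^{S}_{\text{rel}}(E/K) \too \bigoplus_{\q \in S} \frac{H^1(K_\q, E[2])}{\im \delta_\q} \too \Sel^{S}_{\text{str}}(E/K)^\vee,
$$
and by Tate local duality the quotient $H^1(K_\q, E[2])/\im\delta_\q$ is dual to $\im\delta_\q$ itself, which has $\Ftwo$-dimension equal to $\dimtwo E(K_\q)/2E(K_\q)$. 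Since $E(K_\q)[2] = E[2]$ has dimension $2$, the local Euler characteristic formula $\dimtwo E(K_\q)/2E(K_\q) = \dimtwo E(K_\q)[2] + \dimtwo E(K_\q)[2] \cdot [K_\q:\Q_2]$-type bookkeeping... more cleanly: over the non-archimedean local field $K_\q$, $\dimtwo H^1(K_\q, E[2]) = 2\dimtwo E(K_\q)[2] + 2[K_\q:\Q_2]$ when $2 \mid \infty$... but here $q$ is odd, so $\dimtwo H^1(K_\q, E[2]) = 2\dimtwo E(K_\q)[2] = 4$ and $\dimtwo \im\delta_\q = \dimtwo E(K_\q)[2] = 2$. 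Thus each prime in $S$ contributes a $2$-dimensional quotient, and the image of $\Sel^S_{\text{rel}}(E/K)$ in $\bigoplus_{\q\in S} H^1(K_\q,E[2])/\im\delta_\q$, call it $V$, together with $\Sel_2(E/K)$, fits in $0 \to \Sel_2(E/K) \to \Sel^S_{\text{rel}}(E/K) \to V \to 0$; combining with the dimension count one gets a lower bound of the shape $\dimtwo\Sel_2(E/K) \ge 2|A_E| - (\text{something controlled})$. To get the clean inequality $\ge 2|A_E|$ I would instead argue directly: the relevant ``defect'' is itself bounded by the strict Selmer group, but rather than chase that, the cleanest route is to observe that each $W_{q,K} \subseteq H^1(\Q_q, E(\overline{\Q_q}))$ actually pulls back to genuine global elements of $\Sel_2(E/K)$ that die over $\Q$, using the fact that $W_{q,K}$ is precisely the obstruction measured by restriction $\Q \to K$ locally.

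An alternative and perhaps more transparent route, which I suspect is the one intended: use the inflation-restriction sequence globally. Writing $G = \Gal(K/\Q)$, we have $0 \to H^1(G, E(K)) \to H^1(\Q, E) \to H^1(K,E)$, and correspondingly the kernel of $\Sh(E/\Q) \to \Sh(E/K)$ is controlled by the $W_{q,K}$. But for the \emph{lower} bound on $\Sel_2(E/K)$ we want to go the other way and build classes \emph{over $K$} that need not come from $\Q$. I would construct, for each $q\in A_E$, a class in $H^1(K,E[2])$ which is locally trivial (hence in $\Sel_2$) at all places except possibly $\q$, where it can be arranged to land in $\im\delta_\q$ because $E(K_\q)/2E(K_\q)$ is as large as possible by Lemma \ref{Wv}(i); the independence of these $2|A_E|$ classes follows from their disjoint ramification supports (each ramified only at its own $\q$), so they span a subspace of dimension $2|A_E|$ inside $\Sel_2(E/K)$.

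The main obstacle will be ensuring global \emph{existence} of these classes with prescribed local behavior — i.e. that the localization map $H^1(K, E[2]) \to \bigoplus_{\q\in S} H^1(K_\q, E[2])$ has image large enough to hit a $2|A_E|$-dimensional target inside $\bigoplus \im\delta_\q$. This is exactly where Poitou--Tate duality enters: the cokernel of that localization map (on the relaxed Selmer group) is the dual of the strict Selmer group, so the bound degrades only by $\dimtwo \Sel^S_{\text{str}}(E/K)$, and one must argue this loss is already absorbed, or simply accept the honest inequality. Given that the proposition as stated claims exactly $\ge 2|A_E|$ with no error term, I expect the author in fact proves $\dimtwo\Sel_2(E/K) \ge \dimtwo\Sel^S_{\text{rel}}(E/K) \ge \sum_{\q\in S}\dimtwo\im\delta_\q$ minus a dual Selmer term that is shown to vanish or to be reabsorbed by an unramified-outside-$S$ argument, and the technical heart is verifying that the global-to-local map surjects onto $\bigoplus_{\q\in S}\im\delta_\q$. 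I would handle this by a Chebotarev/Greenberg--Wiles Euler characteristic computation, which is the step I'd budget the most care for.
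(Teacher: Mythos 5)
There is a genuine gap here, and you essentially flag it yourself at the end. Your plan is to work entirely over $K$: relax the Selmer conditions at $S=\{\q : q\in A_E\}$ and then show that the global-to-local map hits a $2|A_E|$-dimensional subspace of $\bigoplus_{\q\in S}\im(\delta_\q)$. The Poitou--Tate/Greenberg--Wiles count you invoke only controls the difference between the relaxed and strict groups: since each local condition at $\q\in S$ is self-dual of dimension $2$, it gives $\dimtwo \Sel^S_{\mathrm{rel}}(E/K)-\dimtwo\Sel_{S,\mathrm{str}}(E/K)=2|S|$, hence a lower bound of $2|S|$ for the \emph{relaxed} group only. The true Selmer group sits between the strict and relaxed groups, and passing from relaxed to true can lose up to $\sum_{\q\in S}\dimtwo\bigl(H^1(K_\q,E[2])/\im(\delta_\q)\bigr)=2|S|$, i.e.\ everything. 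The surjectivity onto $\bigoplus_{\q\in S}\im(\delta_\q)$ that you would need is not a consequence of any Euler-characteristic computation; its failure is measured by a dual Selmer group over $K$ that you have no control over, and Chebotarev cannot rescue this because the primes in $A_E$ are prescribed in advance (the additive, inert primes), not chosen adaptively against that dual Selmer group. So as written the proposal does not yield the clean inequality $\ge 2|A_E|$.

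The paper's proof runs through an entirely different mechanism, which your aside about $W_{q,K}$ ``pulling back to genuine global elements'' gestures at but never develops: it is a one-line citation of the remark after \cite[Corollary 3.3]{Matsuno} combined with Lemma \ref{Wv}(ii). The point of Lemma \ref{Wv} is that $W_{q,K}=H^1(K_\q/\Q_q,E(K_\q))$ is a $2$-dimensional space of \emph{local classes over $\Q_q$ that die over $K_\q$}, and Matsuno's global duality argument for the cyclic extension $K/\Q$ converts the total local contribution $\sum_{q\in A_E}\dimtwo W_{q,K}$ into a lower bound for $\dimtwo\Sel_2(E/K)$ with no error term: one works with cohomology classes over $\Q$ whose localizations lie in the restriction kernels $W_v$, and such classes become everywhere locally trivial after restriction to $K$. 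It is this descent from $\Q$, exploiting the $\Gal(K/\Q)$-structure, that makes the local conditions at the $A_E$-primes cost nothing; that input is absent from your argument, and without it (or a substitute for Matsuno's result) the proof does not close.
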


\begin{proof}
The proposition immediately follows from the remark after \cite[Corollary 3.3]{Matsuno} and Lemma \ref{Wv}(ii). 
\end{proof}

%%%%%%%%%%%%%%%%%%%%%%%%%%%%%%%%%%%%%%%%%%%%%%

\section{Selmer ranks over $\Q$}
We continue to assume that $E/\Q$ has no non-trivial rational $2$-torsion point. Let $d$ be a squarefree integer.

\begin{lem}
Let $q$ be an odd prime. Then 
$$
\dimtwo\left(E(\Q_q)/2E(\Q_q)\right) = \dimtwo\left(E(\Q_q)[2]\right). 
$$
\end{lem}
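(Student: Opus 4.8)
The plan is to use the filtration of $E(\Q_q)$ coming from the formal group to reduce the statement to a triviality about finite abelian groups; nothing delicate is involved.

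First, let $E_1(\Q_q)\subseteq E(\Q_q)$ be the kernel of reduction, so that $E_1(\Q_q)\cong\hat E(q\Z_q)$ for the formal group $\hat E$ of a minimal Weierstrass model, and $E(\Q_q)/E_1(\Q_q)$ is finite for every reduction type (see \cite[Ch.~$\RN{7}$]{Silverman}). Since $q$ is odd, $2$ is a unit in $\Z_q$, so multiplication by $2$ on $\hat E$ is given by a power series $[2](T)=2T+\cdots\in\Z_q[[T]]$ with unit linear coefficient and hence is an automorphism of the formal group (\cite[Ch.~$\RN{4}$]{Silverman}), and therefore of $E_1(\Q_q)$. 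In particular $E_1(\Q_q)[2]=0$ and $E_1(\Q_q)=2E_1(\Q_q)$. (Equivalently: $E(\Q_q)$ is a finitely generated $\Z_q$-module of rank $1$, so $E(\Q_q)\cong\Z_q\oplus T$ with $T$ finite, and $\Z_q[2]=\Z_q/2\Z_q=0$ as $q$ is odd — the same computation.)

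Next, apply the snake lemma to multiplication by $2$ on the short exact sequence
$$0\too E_1(\Q_q)\too E(\Q_q)\too G\too 0,\qquad G:=E(\Q_q)/E_1(\Q_q).$$
Because $E_1(\Q_q)[2]=0$ and $E_1(\Q_q)/2E_1(\Q_q)=0$, the connecting homomorphism vanishes and both outer contributions drop out, giving isomorphisms $E(\Q_q)[2]\isom G[2]$ and $E(\Q_q)/2E(\Q_q)\isom G/2G$. Finally, for the finite abelian group $G$ the exact sequence $0\too G[2]\too G\map{2}G\too G/2G\too 0$ forces $|G[2]|=|G/2G|$, so $\dimtwo(G[2])=\dimtwo(G/2G)$. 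Combining, $\dimtwo\!\left(E(\Q_q)/2E(\Q_q)\right)=\dimtwo(G/2G)=\dimtwo(G[2])=\dimtwo\!\left(E(\Q_q)[2]\right)$, as claimed.

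There is no real obstacle here; the only point deserving a moment's care is that $E(\Q_q)/E_1(\Q_q)$ is finite whether $E$ has good, multiplicative, or additive reduction at $q$ — in each case it contains $\tilde E_{\mathrm{ns}}(\F_q)$ (the $\F_q$-points of a smooth connected one-dimensional group) with finite index, hence is finite. One could also bypass the reduction-type discussion entirely by invoking the $\Z_q$-module structure of $E(\Q_q)$ mentioned parenthetically above.
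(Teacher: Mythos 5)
Your proof is correct and is the standard argument: trivialize multiplication by $2$ on the formal-group part $E_1(\Q_q)$ (using that $2\in\Z_q^\times$), reduce to the finite quotient $E(\Q_q)/E_1(\Q_q)$, and use $|G[2]|=|G/2G|$ for finite abelian $G$. The paper gives no argument of its own here — it simply cites \cite[Lemma 2.2(i)]{MRhilbert} — and your write-up is essentially the proof underlying that citation, so there is nothing to criticize beyond noting that a one-line reference would have sufficed for the paper's purposes.
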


\begin{proof}
See for example, \cite[Lemma 2.2(i)]{MRhilbert}.
\end{proof}

\begin{defn}
\label{cP}
Let $G_E$ be the set of odd primes where $E$ has good reduction. For $0 \le i \le 2$, define 
$$
\cP_{E,i} = G_E \cap \left\{q : \dimtwo\left(E(\Q_q)[2]\right) = i \right\}.   
$$ 
\end{defn}

\begin{rem}
If $q \in G_E$, then  $\Q(E[2])$ is an unramified extension of $\Q$ at $q$, where $\Q(E[2])$ is the smallest extension of $\Q$ that contains the coordinates of all points of $E[2]$. Write $\Frob_q$ for the Frobenius automorphism at $\q$ in $\Gal(\Q(E[2])/\Q)$. Then by \cite[Lemma 4.3]{KMR}, we have
\begin{enumerate}
\item
$\Frob_q$ has degree $3$ if and only if $q \in \cP_{E,0}$,
\item
$\Frob_q$ has degree $2$ if and only if $q \in \cP_{E,1}$,
\item
$\Frob_q = 1$ if and only if $q \in \cP_{E,2}$.
\end{enumerate}
\end{rem}

\begin{defn}
Let $v$ be a place of $\Q$. For $d_v \in \Q_v^\times/(\Q_v^\times)^2$, define the image 
$$\alpha_v(d_v) := \mathrm{Im}\left(E^{d_v}(\Q_v)/2E^{d_v}(\Q_v) \to H^1(\Q_v, E^{d_v}[2]) \cong H^1(\Q_v, E[2])\right),$$ 
where the first map is the Kummer map for multiplication by $2$ on $E^{d_v}$ and the second map is induced by the canonical isomorphsim $E^{d_v}[2] \cong E[2]$.
\end{defn}

\begin{rem}
\label{onlydependon}
For $d \in \Q^\times/(\Q^\times)^2$, note that $\alpha_v(d)$ only depends on the image of $d$ in $\Q_v^\times/(\Q_v^\times)^2$. 
\end{rem}

We recall the Tate local duality:

\begin{thm}
\label{tld}
The Weil pairing and cup product induce a nondegenerate pairing
\begin{equation}
\label{tlp}
\langle  \text{ , } \rangle_v : H^1(\Q_v, E[2]) \times H^1(\Q_v, E[2]) \too H^2(\Q_v, \{\pm1\}) \cong \Ftwo.
\end{equation}
\end{thm}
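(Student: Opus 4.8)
The plan is to derive the statement from the degree-one case of Tate local duality, using the Weil pairing to identify $E[2]$ with its own dual. First I would recall that the Weil pairing gives a nondegenerate, alternating, $\Gal(\overline{\Q_v}/\Q_v)$-equivariant pairing $e_2\colon E[2]\times E[2]\to\bmu_2$ (with $\bmu_2=\{\pm1\}$), hence an isomorphism of Galois modules $E[2]\isom E[2]^\vee:=\Hom(E[2],\bmu_2)$. Combined with the cup product, this exhibits $\langle\ ,\ \rangle_v$ as the composite
$$
H^1(\Q_v,E[2])\times H^1(\Q_v,E[2])\xrightarrow{\ \cup\ }H^2\bigl(\Q_v,E[2]\otimes E[2]\bigr)\xrightarrow{\ e_2\ }H^2(\Q_v,\bmu_2).
$$
So the proof reduces to two things: (a) this composite is a nondegenerate (equivalently, since all the groups in sight are finite, perfect) pairing; (b) $H^2(\Q_v,\bmu_2)\cong\Ftwo$.

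For (a), I would simply invoke Tate local duality in degree one. Its general form (see e.g. Serre, \emph{Galois Cohomology} II.5.2; Milne, \emph{Arithmetic Duality Theorems} I.2; Rubin, \emph{Euler Systems} I.4; or Neukirch--Schmidt--Wingberg VII.2.6) states that for a local field $k$ of characteristic zero and a finite $\Gal(\overline k/k)$-module $M$ annihilated by $n$, the cup-product pairing
$$
H^i(k,M)\times H^{2-i}(k,M^\vee)\too H^2(k,\bmu_n)\xrightarrow{\ \mathrm{inv}\ }\tfrac1n\Z/\Z
$$
is perfect, with all groups finite, for $0\le i\le2$. (At the real place it is customary to phrase this with Tate-modified cohomology $\widehat H^i$, but $\widehat H^1=H^1$, so degree one is not affected.) Taking $k=\Q_v$, $M=E[2]$, $n=2$, $i=1$, and transporting the second factor along $E[2]\isom E[2]^\vee$, this is exactly the nondegeneracy of the composite above.

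For (b), I would run the Kummer sequence $1\to\bmu_2\to\mathbf{G}_m\xrightarrow{\,2\,}\mathbf{G}_m\to1$: Hilbert 90 ($H^1(\Q_v,\mathbf{G}_m)=0$) and $H^2(\Q_v,\mathbf{G}_m)\cong\mathrm{Br}(\Q_v)$ give $H^2(\Q_v,\bmu_2)\cong\mathrm{Br}(\Q_v)[2]$, and $\mathrm{Br}(\Q_v)[2]=\tfrac12\Z/\Z\cong\Ftwo$ for every place $v$ of $\Q$, including the unique archimedean place $\Q_\infty=\R$ (where $\mathrm{Br}(\R)=\tfrac12\Z/\Z$; note that $\Q$ has no complex place, so no exceptional case arises). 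I do not expect a genuine obstacle here: the substance is entirely the cited local duality theorem together with local class field theory, and the only points requiring any care are the bookkeeping at $v=\infty$ and the standard verification that $e_2$ is Galois-equivariant and nondegenerate --- the ``hard part,'' Tate duality itself, is a black box we are entitled to use.
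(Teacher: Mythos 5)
Your proof is correct and takes essentially the same route as the paper, which simply cites local Tate duality (\cite[Theorem 7.2.6]{cohomology}); your write-up just fleshes out that citation with the standard reduction via the Weil-pairing self-duality of $E[2]$ and the Brauer-group computation of $H^2(\Q_v,\{\pm1\})$. Nothing is missing.
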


\begin{proof}
See \cite[Theorem 7.2.6]{cohomology}.
\end{proof}

Define the restriction map at $v$
  $$\res_v : H^1(\Q,E[2]) \too H^1(\Q_v, E[2]).$$

\begin{comment}
\begin{rem}
For $d_v \in \Q_v^\times/(\Q_v^\times)^2$, it is well-known $\alpha_v(d_v)$ is its own orthogonal complement. It then follows that $\dimtwo(\alpha_v(d_v)) =1/2 \dimtwo(H^1(\Q_v,E[2]))$ 
\end{rem}
\end{comment}

\begin{defn}
\label{selmer}
The $2$-Selmer group $\Sel_2(E^d/\Q) \subset H^1(\Q,E[2])$ is the (finite)
$\F_2$-vector space defined by the following exact sequence
$$
0 \too \Sel_2(E^d/\Q) \too H^1(\Q,E[2]) \too \prod_{v} H^1(\Q_v,E[2])/\alpha_v(d),
$$
where the rightmost map is the sum of $\res_v$. In particular, if $d =1$, it is the classical $2$-Selmer group of $E/\Q$. 
\end{defn}

We define various Selmer groups as follows.

\begin{defn}
\label{variousselmer}
Let $S$ be a finite set of places of $\Q$. Define
$$
\Sel_{2, S}(E^d/\Q) := \left\{x \in \Sel_{2}(E^d/\Q) : \mathrm{res}_{q}(x) = 0 \text{ for all $q \in S$} \right\}.
$$
Define
$$
\Sel_{2}^S(E^d/\Q) := \left\{x \in H^1(\Q, E[2]) : \mathrm{res}_{q}(x) \in \alpha_q(d) \text{ for all $q \not\in S$} \right\}.
$$
If $S = \{v\}$, we simply write $\Sel_{2, v}(E^d/\Q)$ and $\Sel_{2}^v(E^d/\Q)$ for $\Sel_{2, \{v\}}(E^d/\Q)$ and $\Sel_{2}^{\{v\}}(E^d/\Q)$, respectively.
\end{defn}

\begin{thm}
Let $S$ be a finite set of places of $\Q$. 
\label{ptd}
The images of right hand restriction maps of the following exact sequences are orthogonal complements with respect to the pairing given by the sum of pairings \eqref{tlp} of the places $v\in S$
$$
\xymatrix@R=3pt@C=7pt{
0 \ar[r] & \Sel_{2}(E/\Q) \ar[r] & \Sel_2^S(E/\Q) \ar[r]
    & \dirsum{v \in S} H^1(\Q_v,E[2])/\alpha_v(1),  \\
0 \ar[r] & \Sel_{2, S}(E/\Q) \ar[r] & \Sel_2(E/\Q) \ar[r] & \dirsum{v \in S} \alpha_v(1).
}
$$
In particular, 
$$
\dimtwo(\Sel_2^S(E/\Q)) - \dimtwo(\Sel_{2, S}(E/\Q)) = \sum_{v \in S} \dimtwo(\alpha_v(1)).
$$
\end{thm}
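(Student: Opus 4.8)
The plan is to deduce the theorem from global Poitou--Tate duality, exploiting that the local conditions $\alpha_v(1)$ are maximal isotropic for the pairings \eqref{tlp}. First I would enlarge $S$ to a finite set $\Sigma$ of places of $\Q$ that also contains the archimedean place, the prime $2$, and all primes of bad reduction of $E$; then $\alpha_v(1) = H^1(\Q_v^{\ur}/\Q_v, E[2])$ for every $v \notin \Sigma$, and every class in $\Sel_2(E/\Q)$, $\Sel_2^S(E/\Q)$ or $\Sel_{2,S}(E/\Q)$ is unramified outside $\Sigma$, hence (via inflation, which is injective) lies in $H^1(G_\Sigma, E[2])$, where $G_\Sigma = \Gal(\Q_\Sigma/\Q)$ and $\Q_\Sigma$ is the maximal extension of $\Q$ unramified outside $\Sigma$. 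In the ambient space $U := \dirsum{v\in\Sigma} H^1(\Q_v, E[2])$, equipped with the orthogonal sum $\langle\,,\rangle_\Sigma$ of the pairings \eqref{tlp}, I would introduce
$$
W := \dirsum{v\in\Sigma}\alpha_v(1), \qquad W' := W + \dirsum{v\in S} H^1(\Q_v, E[2]), \qquad W'' := \dirsum{v\in\Sigma\setminus S}\alpha_v(1),
$$
so that $W'' \subseteq W \subseteq W'$. Writing $\loc\colon H^1(G_\Sigma, E[2]) \to U$ for the sum of the $\res_v$ with $v \in \Sigma$, one checks that, viewed inside $H^1(G_\Sigma, E[2])$, one has $\Sel_2(E/\Q) = \loc^{-1}(W)$, $\Sel_2^S(E/\Q) = \loc^{-1}(W')$ and $\Sel_{2,S}(E/\Q) = \loc^{-1}(W'')$, and that the two maps appearing in the theorem are $\loc$ followed by the projections $W' \onto W'/W \cong \dirsum{v\in S} H^1(\Q_v, E[2])/\alpha_v(1)$ and $W \onto W/W'' \cong \dirsum{v\in S}\alpha_v(1)$.

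Next I would bring in the two duality inputs. Since each $\alpha_v(1)$ is its own orthogonal complement under \eqref{tlp} (local Tate duality --- this is the content of the commented-out remark preceding Definition \ref{selmer}), one gets $W^\perp = W$ and $(W')^\perp = W''$ in $U$; hence $\langle\,,\rangle_\Sigma$ descends to a perfect pairing on $W'/W''$ for which $W/W''$ is Lagrangian, and unwinding definitions the induced perfect pairing between $W/W''$ and the quotient $W'/W$ is exactly the sum over $v \in S$ of the pairings induced by \eqref{tlp}. The global input is Poitou--Tate duality for the self-dual module $E[2]$: because $E[2] \cong \Hom(E[2],\bmu)$ via the Weil pairing, the nine-term exact sequence (I would cite \cite{cohomology}, which already supplies the local duality) shows that $C := \loc\bigl(H^1(G_\Sigma, E[2])\bigr)$ is its own orthogonal complement in $U$ with respect to $\langle\,,\rangle_\Sigma$. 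A short computation with the modular law, using $C^\perp = C$ and $(W')^\perp = W''$, then shows that the image $\overline{C}$ of $C \cap W'$ in $W'/W''$ satisfies $\overline{C}^\perp = \overline{C}$, i.e. $\overline{C}$ is Lagrangian in $W'/W''$.

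The proof then concludes with elementary linear algebra over $\Ftwo$: if $Y$ carries a perfect pairing and $Y_1$, $Q$ are two Lagrangian subspaces, then $Q \cap Y_1 \subseteq Y_1$ and the image of $Q$ in $Y/Y_1$ annihilate each other (both $Y_1$ and $Q$ being isotropic), and since $\dim Q = \tfrac{1}{2}\dim Y = \dim Y_1$ a dimension count forces them to be exact orthogonal complements under the perfect pairing $Y_1 \times Y/Y_1 \to \Ftwo$. Applying this with $Y = W'/W''$, $Y_1 = W/W''$, $Q = \overline{C}$, and tracing the identifications above, $Q \cap Y_1$ is the image of $\Sel_2(E/\Q)$ in $\dirsum{v\in S}\alpha_v(1)$ and the image of $Q$ in $Y/Y_1$ is the image of $\Sel_2^S(E/\Q)$ in $\dirsum{v\in S} H^1(\Q_v,E[2])/\alpha_v(1)$, which is the first assertion. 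For the displayed formula, the kernels of the two maps in the theorem are $\Sel_2(E/\Q)$ and $\Sel_{2,S}(E/\Q)$ respectively, so the two images have dimensions $\dimtwo(\Sel_2^S(E/\Q)) - \dimtwo(\Sel_2(E/\Q))$ and $\dimtwo(\Sel_2(E/\Q)) - \dimtwo(\Sel_{2,S}(E/\Q))$; adding these, and using that being orthogonal complements makes them add up to $\dimtwo(\dirsum{v\in S}\alpha_v(1)) = \sum_{v\in S}\dimtwo(\alpha_v(1))$, gives the claimed identity.

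I expect the main obstacle to be the global duality step: obtaining that $\im(\loc)$ is \emph{exactly} self-orthogonal (rather than merely isotropic) requires the full Poitou--Tate sequence and the vanishing or cancellation of the relevant $H^2$ terms, and one must choose $\Sigma$ large enough that, simultaneously, the unramified conditions at $v \notin \Sigma$ are maximal isotropic and are automatically satisfied by every global Selmer class. Everything else reduces to local Tate duality, already quoted in the paper, and linear algebra over $\Ftwo$.
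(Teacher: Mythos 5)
Your proposal is correct, but it is worth noting that the paper does not actually prove this theorem: it simply cites \cite[Theorem 2.3.4]{MRkolyvagin}, which is Mazur--Rubin's general global duality theorem for Selmer structures. What you have written is, in effect, a self-contained proof of the special case of that cited result that is needed here, and it follows the same underlying route that Mazur--Rubin use: identify the three Selmer groups as $\loc^{-1}$ of nested local-condition subspaces $W'' \subseteq W \subseteq W'$ of $U = \dirsum{v\in\Sigma}H^1(\Q_v,E[2])$, use local Tate duality to get $W^\perp = W$ and $(W')^\perp = W''$ (each $\alpha_v(1)$ being maximal isotropic), use the Poitou--Tate nine-term sequence for the self-dual module $E[2]$ to get that $C = \im(\loc)$ is self-orthogonal, and finish with the Lagrangian linear algebra in $W'/W''$. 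Your details check out: the modular-law computation $(C+W'')\cap W' = (C\cap W')+W''$ legitimately shows $\overline{C}$ is Lagrangian, the identification of $\overline{C}\cap(W/W'')$ with the image of $\Sel_2(E/\Q)$ in $\dirsum{v\in S}\alpha_v(1)$ is right (since $c\in C\cap W'$ lying in $W+W''=W$ means $c\in C\cap W = \loc(\Sel_2(E/\Q))$), and the dimension count closes the argument. The trade-off is the usual one: the citation keeps the paper short, while your argument makes explicit exactly which duality inputs are consumed (local self-orthogonality of the Kummer images, including at $v=2$ and the archimedean place, plus exact self-orthogonality of the global localization image, which genuinely requires the full Poitou--Tate sequence and not just reciprocity) at the cost of having to choose $\Sigma$ large enough that all three Selmer groups live in $H^1(G_\Sigma, E[2])$ and the unramified conditions outside $\Sigma$ are themselves maximal isotropic.
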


\begin{proof}
See \cite[Theorem 2.3.4]{MRkolyvagin}. 
\end{proof}

\begin{cor}
\label{corofptd}
Let $T$ be a set of places containing all places $v$, where the local conditions of $E$ and $E^d$ are not the same, i.e., $\alpha_v(1) \neq \alpha_v(d)$. Then
$$
\left|\dimtwo(\Sel_2(E/\Q))  - \dimtwo(\Sel_2(E^d/\Q))\right| \le \sum_{v \in T} \dimtwo(\alpha_v(1)).
$$
\end{cor}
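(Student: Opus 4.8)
The plan is to sandwich both $\Sel_2(E/\Q)$ and $\Sel_2(E^d/\Q)$ between the same pair of auxiliary Selmer groups $\Sel_{2,T}$ and $\Sel_2^T$ attached to $\Q$, and then apply Theorem \ref{ptd} to bound the gap between these two.

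First I would note that, because $T$ contains every place $v$ with $\alpha_v(1)\neq\alpha_v(d)$, the local conditions of $E$ and $E^d$ coincide at every place outside $T$. Comparing this with Definition \ref{variousselmer}: the group $\Sel_2^T(E^d/\Q)$ is cut out inside $H^1(\Q,E[2])$ only by the conditions $\res_q(x)\in\alpha_q(d)$ for $q\notin T$, and these agree with the defining conditions of $\Sel_2^T(E/\Q)$, so $\Sel_2^T(E/\Q)=\Sel_2^T(E^d/\Q)$. For the strict groups, observe that at a place $q\in T$ the requirement $\res_q(x)=0$ already forces $\res_q(x)\in\alpha_q(\,\cdot\,)$, since $0$ lies in every $\alpha_q$; hence $\Sel_{2,T}(E/\Q)$ consists of those $x\in H^1(\Q,E[2])$ with $\res_q(x)=0$ for $q\in T$ and $\res_v(x)\in\alpha_v(1)=\alpha_v(d)$ for $v\notin T$, and the same description holds for $\Sel_{2,T}(E^d/\Q)$, so $\Sel_{2,T}(E/\Q)=\Sel_{2,T}(E^d/\Q)$ as well.

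Next, directly from the definitions one has the chain of inclusions $\Sel_{2,T}(E/\Q)\subseteq\Sel_2(E/\Q)\subseteq\Sel_2^T(E/\Q)$, and the analogous chain for $E^d$. Combining these inclusions with the two equalities from the previous paragraph, both $\dimtwo(\Sel_2(E/\Q))$ and $\dimtwo(\Sel_2(E^d/\Q))$ lie in the closed interval with lower endpoint $\dimtwo(\Sel_{2,T}(E/\Q))$ and upper endpoint $\dimtwo(\Sel_2^T(E/\Q))$. By the displayed identity in Theorem \ref{ptd}, applied to $E$ with $S=T$, the length of this interval equals $\sum_{v\in T}\dimtwo(\alpha_v(1))$. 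Hence $\bigl|\dimtwo(\Sel_2(E/\Q))-\dimtwo(\Sel_2(E^d/\Q))\bigr|$ is at most this sum, which is the claim.

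I do not expect a genuine obstacle here: the argument is essentially bookkeeping about which local conditions are imposed in each of the four Selmer groups, followed by one invocation of Theorem \ref{ptd}. The only mild point worth a remark is that $T$ should be taken finite so that the right-hand side is a well-defined finite sum; this holds as soon as $T$ contains the (finitely many) places of bad reduction, the place $2$, the archimedean place, and the primes dividing $d$, and enlarging $T$ beyond the minimal such choice only adds nonnegative terms to the right-hand side, so the inequality is unaffected.
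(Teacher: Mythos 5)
Your proof is correct and follows essentially the same route as the paper: both arguments sandwich $\Sel_2(E/\Q)$ and $\Sel_2(E^d/\Q)$ between $\Sel_{2,T}(E/\Q)$ and $\Sel_2^T(E/\Q)$ and then invoke Theorem \ref{ptd} with $S=T$ to bound the length of that interval. You merely spell out more explicitly the identifications $\Sel_{2,T}(E/\Q)=\Sel_{2,T}(E^d/\Q)$ and $\Sel_2^T(E/\Q)=\Sel_2^T(E^d/\Q)$, which the paper leaves implicit.
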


\begin{proof}
We have 
$$
\Sel_{2,T}(E/\Q) \subseteq \Sel_2(E/\Q), \Sel_2(E^d/\Q) \subseteq \Sel_2^T(E/\Q). 
$$
Then the corollary is an immediate consequence of Theorem \ref{ptd}.
\end{proof}

\begin{comment}

\begin{cor}
Let $S_1$ be a set of places containing $2$, primes where $E$ has bad reduction, primes that divides $d$, and the archimedean place. Let $S_2$ be the set of primes $v$ such that $d \in (\Q_v^{\times})^2$. Write $T:= S_1 - S_2$. Then 
$$
|\dimtwo(\Sel_2(E/\Q))  - \dimtwo(\Sel_2(E^d/\Q))| \le \sum_{v \in T} \frac{1}{2} \dimtwo(H^1(\Q_v, E[2])).
$$
\end{cor}

\begin{proof}
Note that $E$ and $E^d$ have the same local condition (i.e., $\alpha_v(1) = \alpha_v(d)$) at every prime $v \not\in T$. Therefore, we have
$$
\Sel_{2,T}(E/\Q) \subseteq \Sel_2(E/\Q), \Sel_2(E^d/\Q) \subseteq \Sel_2^T(E/\Q). 
$$
Now the corollary immediately follows from Theorem \ref{ptd}. 
\end{proof}

\end{comment}

\begin{lem}
\label{ram}
For $0 \le i \le 2$, let $q$ be an odd prime in $\cP_{E,i}$. Let $v_q$ be the normalized (additive) valuation of $\Q_q$. For $d_q \in \Q_q^\times$, if $v_q(d_q)$ is odd, then
$$
\alpha_q(1) \cap \alpha_q(d_q) = 0.
$$  
\end{lem}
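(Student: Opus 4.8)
The plan is to compute $\alpha_q(1)$ and $\alpha_q(d_q)$ explicitly enough to see that their intersection is trivial, using the structure of the Kummer map on local points together with the fact that the two twists differ precisely by the ramified quadratic character $\chi_{d_q}$ attached to $d_q$. First I would recall that for an odd prime $q$ the group $H^1(\Q_q, E[2])$ splits naturally according to ramification: since $E[2]$ is unramified at $q$ (as $q$ is odd and, after the analysis below, we only need that $q \nmid 2$), the inflation--restriction sequence gives a decomposition $H^1(\Q_q, E[2]) = H^1_{\ur}(\Q_q, E[2]) \oplus H^1_{\ram}(\Q_q, E[2])$, where the unramified part is $H^1(\Frob_q, E[2])$ and the ramified part is $\Hom(I_q, E[2])^{\Frob_q}$ for $I_q$ the inertia group (after choosing a splitting, e.g. via a uniformizer). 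Under Tate local duality \eqref{tlp} these two summands are each isotropic and are exact orthogonal complements of one another.

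Next I would identify $\alpha_q(1)$, the image of the Kummer map for $E$ itself. Because $q$ is a prime of good reduction in $\cP_{E,i}$, the image of $E(\Q_q)/2E(\Q_q)$ in $H^1(\Q_q, E[2])$ is exactly the unramified subgroup: this is the standard fact that at a prime of good reduction away from $2$ the local condition coming from the Kummer map is the unramified (= finite) local condition, and it has dimension equal to $\dimtwo(E(\Q_q)[2]) = i$ by the Lemma just before Definition \ref{cP}. So $\alpha_q(1) \subseteq H^1_{\ur}(\Q_q, E[2])$, and in fact equals $H^1(\Frob_q, E[2])$. The heart of the argument is then to show that when $v_q(d_q)$ is odd, the twisted image $\alpha_q(d_q)$ meets $H^1_{\ur}(\Q_q, E[2])$ only in $0$; since $\alpha_q(1)$ lies in the unramified part, this immediately yields $\alpha_q(1) \cap \alpha_q(d_q) = 0$.

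To control $\alpha_q(d_q)$ I would use that $E^{d_q}$ differs from $E$ by twisting by the quadratic character $\chi = \chi_{d_q}$, which is \emph{ramified} at $q$ precisely because $v_q(d_q)$ is odd. Concretely, a class in $\alpha_q(d_q)$ is the image under the Kummer map of a point $P \in E^{d_q}(\Q_q)$, transported to $H^1(\Q_q, E[2])$ via the fixed isomorphism $E^{d_q}[2] \cong E[2]$; I would track how the twist by a ramified character forces any such class that is restricted to inertia to be nonzero, unless the point is $2$-divisible. One clean way: write $E^{d_q}(\Q_q)/2E^{d_q}(\Q_q) \hookrightarrow H^1(\Q_q, E[2])$ and observe that its composition with restriction to $I_q$ records exactly the action of $\chi$ on the relevant torsion; since $\chi|_{I_q}$ is nontrivial and $E[2]$ is unramified, the only classes in the image that become unramified (i.e. trivial on $I_q$) come from $\Q_q$-points whose coordinates already lie in the untwisted picture, and these are killed on passing to $E^{d_q}(\Q_q)/2E^{d_q}(\Q_q)$ — more precisely, $\alpha_q(1)$ and $\alpha_q(d_q)$ have the same image in $H^1_{\ur}$ only through the zero class. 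I expect the main obstacle to be making this last step fully rigorous: one must carefully handle the three cases $i = 0, 1, 2$ (equivalently, the $\Frob_q$-module structure of $E[2]$, which can be the full $\mathbf{S}_3$-type, a transposition, or trivial), and verify in each case that the ramified class produced by the twist has no unramified component. This is a local computation with the explicit cocycles, but it is exactly the kind of case analysis that Lemma \ref{Wv} and the remark on Frobenius degrees after Definition \ref{cP} are set up to support, so I would lean on that structure rather than brute force.
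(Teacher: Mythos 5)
Your overall plan is the right one, and it is essentially the argument behind the result the paper invokes: the paper's own proof is a single line, observing that $v_q(d_q)$ odd means $\Q_q(\sqrt{d_q})/\Q_q$ is ramified and then citing \cite[Lemma 2.11]{MRhilbert}. Identifying $\alpha_q(1)$ with the unramified subgroup $H^1_{\ur}(\Q_q,E[2])$ (good reduction, $q$ odd, with $\dimtwo\alpha_q(1)=i=\dimtwo H^1_{\ur}$) and reducing to $\alpha_q(d_q)\cap H^1_{\ur}(\Q_q,E[2])=0$ is correct. The problem is that this reduction leaves the entire content of the lemma in the one step you flag as ``the main obstacle,'' and what you write there is not an argument: the sentence ``the only classes in the image that become unramified come from $\Q_q$-points whose coordinates already lie in the untwisted picture, and these are killed on passing to $E^{d_q}(\Q_q)/2E^{d_q}(\Q_q)$'' is a restatement of the conclusion, and ``the composition with restriction to $I_q$ records exactly the action of $\chi$ on the relevant torsion'' has no precise meaning --- the restriction of the Kummer class of $P$ to inertia is $\sigma\mapsto \sigma Q-Q$ for any $Q$ with $2Q=P$, and the character $\chi$ does not act on anything in that formula. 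The proposed repair (explicit cocycles plus a case analysis on the $\Frob_q$-type of $E[2]$) also points in the wrong direction: the statement is uniform in $i$, and the real input is the reduction type of the twist, not the Galois module structure of $E[2]$.

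Here is the mechanism your sketch is missing. Since $q$ is odd, $E$ has good reduction at $q$, and $v_q(d_q)$ is odd, the twist $E^{d_q}$ has \emph{additive} reduction at $q$, and still does over $\Q_q^{\ur}$. A class of $\alpha_q(d_q)$ is unramified exactly when the corresponding point $P\in E^{d_q}(\Q_q)$ lies in $2E^{d_q}(\Q_q^{\ur})$. Now $E^{d_q}_0(\Q_q^{\ur})$ is uniquely $2$-divisible with no $2$-torsion (the formal group is pro-$q$, and the nonsingular locus of the reduction is the additive group of $\overline{\F}_q$), while $E^{d_q}(\Q_q^{\ur})$ contains $E^{d_q}[2]\cong(\Z/2\Z)^2$ because $E[2]$ is unramified, and the component group of an additive fiber has order at most $4$; hence $E^{d_q}(\Q_q^{\ur})=E^{d_q}[2]\oplus E^{d_q}_0(\Q_q^{\ur})$ and $2E^{d_q}(\Q_q^{\ur})=E^{d_q}_0(\Q_q^{\ur})$. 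Therefore such a $P$ lies in $E^{d_q}_0(\Q_q^{\ur})\cap E^{d_q}(\Q_q)=E^{d_q}_0(\Q_q)$, which is again $2$-divisible, so its Kummer class is trivial. This closes the gap with no case division on $i$. One further small correction: your claim that the unramified and ramified summands are ``exact orthogonal complements of one another'' under \eqref{tlp} is false --- $H^1_{\ur}$ is its \emph{own} orthogonal complement --- but you never actually use that assertion.
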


\begin{proof}
Note that the condition $v_q(d_q)$ is odd is equivalent to the condition that $\Q_q(\sqrt{d_q})$ is a ramified (quadratic) extension of $\Q_q$. The lemma follows from \cite[Lemma 2.11]{MRhilbert}.
\end{proof}

For $A \subset H^1(\Q,E[2])$, we write $\res_q(A)$ for the image of $A$ under $\res_q$.

\begin{lem}
\label{selmerbound}
Let $q$ be an odd prime in $\cP_{E,2}$. There exists $d_q \in \Q_q^\times$ such that $v_q(d_q)$ is odd and
$$
\dimtwo\left(\alpha_q(d_q) \cap \res_q(\Sel_2^q(E/\Q))\right) \le \dimtwo\left(\alpha_q(1) \cap \res_q(\Sel_2^q(E/\Q))\right) .
$$
\end{lem}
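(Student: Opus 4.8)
The plan is to reduce the inequality to a piece of symplectic linear algebra over $\Ftwo$ inside $V := H^1(\Q_q, E[2])$, carrying the Tate pairing of Theorem~\ref{tld}. Since $q \in \cP_{E,2}$ lies in $G_E$ and has $\Frob_q = 1$ on $\Q(E[2])$, the module $E[2]$ is trivial over $\Q_q$, so $V \cong \bigl(\Q_q^\times/(\Q_q^\times)^2\bigr) \otimes_{\Ftwo} E[2]$ is $4$-dimensional, the pairing is $\langle a \otimes P,\ b \otimes Q\rangle_q = (a,b)_q\, e_2(P,Q)$ (Hilbert symbol times Weil pairing $e_2$), and $\alpha_q(1) = H^1_{\ur}(\Q_q, E[2]) = \langle u\rangle \otimes E[2]$ is $2$-dimensional because $E$ has good reduction at $q$ (here $u$ is a nonsquare unit). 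Write $W := \res_q(\Sel_2^q(E/\Q))$ and $W_0 := \res_q(\Sel_2(E/\Q))$. I would first record three facts: $W \cap \alpha_q(1) = W_0$ (a class of $\Sel_2^q(E/\Q)$ unramified at $q$ already lies in $\Sel_2(E/\Q)$); $\dimtwo W = 2$, from Theorem~\ref{ptd} with $S = \{q\}$ (which gives $\dimtwo\bigl((W + \alpha_q(1))/\alpha_q(1)\bigr) = 2 - \dimtwo W_0$) combined with the previous identity; and $W$ is isotropic, by global reciprocity $\sum_v \langle \res_v x, \res_v y\rangle_v = 0$, the terms with $v \ne q$ vanishing because $\alpha_v(1)$ is its own annihilator. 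Hence $W$ is a Lagrangian plane and the right-hand side of the asserted inequality equals $\dimtwo W_0$.

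If $W_0 \ne 0$, then \emph{every} $d_q$ with $v_q(d_q)$ odd works: by Lemma~\ref{ram}, $\alpha_q(d_q) \cap \alpha_q(1) = 0 \ne W_0 = W \cap \alpha_q(1)$, so $\alpha_q(d_q) \ne W$; as both spaces are $2$-dimensional, $\dimtwo(\alpha_q(d_q) \cap W) \le 1 \le \dimtwo W_0$.

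So suppose $W_0 = 0$. Then $W \cap \alpha_q(1) = 0$, and by Lemma~\ref{ram} also $\alpha_q(q) \cap \alpha_q(1) = \alpha_q(uq) \cap \alpha_q(1) = 0$; fixing the complement $V_{\ram} := \langle q\rangle \otimes E[2]$ of $\alpha_q(1)$, each of $W$, $\alpha_q(q)$, $\alpha_q(uq)$ is the graph of a linear map $V_{\ram} \to \alpha_q(1)$, and identifying $V_{\ram}$ and $\alpha_q(1)$ with $E[2]$ in the obvious way turns these into endomorphisms $F_W, F_1, F_2$ of $E[2]$. A graph is Lagrangian precisely when its endomorphism $F$ satisfies $F + F^{\ast} = \epsilon\,\mathrm{id}$, with $F^{\ast}$ the adjoint for $e_2$ and $\epsilon := (q,q)_q \in \Ftwo$; consequently $G := F_1 + F_W$ is self-adjoint. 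The key input, which I would extract from an explicit computation with the classical $2$-descent maps of $E^{q}$ and $E^{uq}$ — using that for ramified $d$ the image $\alpha_q(d)$ is the image of $E^d[2] = E[2]$ (additive reduction, as in the proof of Lemma~\ref{Wv}) and the formula $\delta_{E^d}(P) = \delta_E(P) + \bar d \otimes \psi(P)$ for a fixed $\Ftwo$-linear automorphism $\psi$ of $E[2]$, so that subtracting the two cases cancels $\psi$ and the $\alpha_q(1)$-part — is that $F_2 = F_1 + \mathrm{id}$. It then follows that $\alpha_q(q) \cap W \cong \ker G$ and $\alpha_q(uq) \cap W \cong \ker(G + \mathrm{id})$; and for a self-adjoint endomorphism $G$ of the $2$-dimensional symplectic $\Ftwo$-space $E[2]$ a direct $2 \times 2$ computation gives $\det G + \det(G + \mathrm{id}) = 1$. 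Thus exactly one of $\ker G$, $\ker(G+\mathrm{id})$ is zero, and choosing $d_q = q$ or $d_q = uq$ accordingly yields $\dimtwo(\alpha_q(d_q) \cap W) = 0 = \dimtwo W_0$, completing the proof.

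The step I expect to be the real work is the key input of the last paragraph: pinning down the relation $F_2 = F_1 + \mathrm{id}$ between the local Selmer conditions of the two ramified twists $E^q$ and $E^{uq}$, together with a careful check that the choices of isomorphisms $V_{\ram}, \alpha_q(1) \cong E[2]$ and the adjunction conventions are mutually consistent, so that the clean identity $\det G + \det(G+\mathrm{id}) = 1$ genuinely applies. Everything else is formal.
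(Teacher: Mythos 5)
Your argument reaches the same conclusion by a genuinely different and more self-contained route. The paper's proof is a three-case analysis on $X:=\alpha_q(1)\cap\res_q(\Sel_2^q(E/\Q))$ that outsources the two nontrivial cases to citations: for $\dimtwo(X)=1$ it invokes results of \cite{Yu2} to get that every ramified twist gives intersection exactly $1$, and for $X=0$ it invokes \cite{Yu3} to get that one of the two ramified local conditions equals $\res_q(\Sel_2^q(E/\Q))$ while the other meets it trivially. You instead (a) handle $X\neq 0$ by the elementary observation that $\alpha_q(d_q)\cap\alpha_q(1)=0\neq X$ forces $\alpha_q(d_q)\neq W$, hence intersection of dimension at most $1\le\dimtwo(X)$ --- strictly easier than what the paper cites, since you only need an inequality; and (b) handle $X=0$ by identifying the three Lagrangians transverse to $\alpha_q(1)$ with endomorphisms of $E[2]$ and using the identity $\det G+\det(G+\mathrm{id})=1$ for self-adjoint $G$ on a $2$-dimensional symplectic $\Ftwo$-space. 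Your supporting computations (that $W$ is a Lagrangian plane by reciprocity, that $W\cap\alpha_q(1)=\res_q(\Sel_2(E/\Q))$, the translation of the isotropy condition into $F+F^\ast=\epsilon\,\mathrm{id}$, and the determinant identity) all check out. What your approach buys is independence from \cite{Yu2} and \cite{Yu3}; what it costs is that the arithmetic content of those citations resurfaces as your ``key input'' $F_2=F_1+\mathrm{id}$, which you assert with a sketch rather than prove.

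For the record, that key input is correct and your sketch is the right one: writing $E$ as $y^2=\prod(x-e_i)$ and $E^d$ as $y^2=\prod(x-de_i)$, the $2$-torsion point of $E^d$ corresponding to $T_i=(e_i,0)$ maps under the Kummer map to $\delta_E(T_i)+\bar d\otimes T_i$ (so in fact $\psi=\mathrm{id}$), and since for ramified $d$ and $q\in\cP_{E,2}$ one has $\alpha_q(d)=\delta_{E^d}\left(E^d(\Q_q)[2]\right)$ with $E^d(\Q_q)[2]=E^d[2]$, the two graphs for $d=q$ and $d=uq$ differ exactly by $u\otimes T$, i.e.\ $F_2=F_1+\mathrm{id}$ under your identifications (and the sign $(q,u)_q=-1$ makes the adjoint conventions consistent). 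So there is no error, but in a finished write-up this computation would need to be carried out explicitly --- it is precisely the content of the results of \cite{Yu3} that the paper cites --- rather than flagged as ``the real work.''
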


\begin{proof}
For such $q$, we have $\dimtwo(\res_q(\Sel_2^q(E/\Q))) = 2$ by Theorem \ref{ptd}. For simplicity, let 
$$X:=\alpha_q(1) \cap \res_q(\Sel_2^q(E/\Q)).$$  

If $\dimtwo(X) =2$, there is nothing to show. If $\dimtwo(X) = 1$, then for any $d_q \in K_q^\times$, we have $\dimtwo\left(\alpha_q(d_q) \cap \res_q(\Sel_2^q(E/\Q))\right) = 1$ by Lemma \ref{ram}, and \cite[Theorem 2.5 and Lemma 2.9]{Yu2}. If $X = 0$, choose $c_q$, $d_q \in \Q_q^\times$ so that $\Q_q(\sqrt{c_q})$ and $\Q_q(\sqrt{d_q})$ are distinct ramified extension over $\Q_q$. By \cite[Lemma 3.9 and Proposition 3.10(i)]{Yu3}, without loss of generality, we can assume
\begin{align*}
\alpha_q(c_q) &= \res_q\left(\Sel_2^q(E/\Q)\right), \\
\alpha_q(d_q) & \cap \res_q\left(\Sel_2^q(E/\Q)\right) = 0,
\end{align*}
so the lemma follows. 
\end{proof}

\begin{prop}
\label{Ewithmanyaddred}
Assume that $E/\Q$ has no rational $2$-torsion point. Let $q_1$,$q_2$,$\ldots$, $q_s$ be elements of $\cP_{E,2}$. Then there exist infinitely many squarefree odd integers $d$ such that the following conditions are satisfied. 
\begin{enumerate}
\item
$\dim_{\Ftwo} \left(\Sel_2(E^d/\Q)\right) \le \dim_{\Ftwo} \left(\Sel_2(E/\Q)\right)$,
\item
Every prime divisor of $d$ is in $\cP_{E,0} \cup \cP_{E,2}$ (in particular if $q|d$, $E$ has good reduction at $q$)
\item
The prime factors of $d$ in $\cP_{E,2}$ are exactly $q_1, q_2,\ldots, q_s$. 
 \end{enumerate}
\end{prop}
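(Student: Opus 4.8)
The plan is to produce $d$ of the shape $d = q_1 q_2 \cdots q_s \cdot \ell_1 \ell_2 \cdots \ell_s$, adjoining the $q_i$ one at a time together with an auxiliary prime $\ell_i \in \cP_{E,0}$. Set $d_0 = 1$ and $d_i = q_1\ell_1 \cdots q_i\ell_i$, so $d = d_s$; we may assume the $q_i$ are pairwise distinct. Throughout one uses that $E^{d_i}[2] \cong E[2]$ as $G_\Q$-modules, so each $E^{d_i}$ again has no rational $2$-torsion point, $\Q(E^{d_i}[2]) = \Q(E[2])$, and $E^{d_i}(\Q_p)[2] = E(\Q_p)[2]$ for primes $p \nmid d_i$ of good reduction for $E$; in particular $q_i \in \cP_{E^{d_{i-1}},2}$, since $q_i \notin \{q_1,\dots,q_{i-1}\}$ and $q_i \notin \{\ell_1,\dots,\ell_{i-1}\}$ (as $\cP_{E,0} \cap \cP_{E,2} = \varnothing$). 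I will arrange $\dimtwo \Sel_2(E^{d_i}/\Q) \le \dimtwo \Sel_2(E^{d_{i-1}}/\Q)$ for each $i$, which yields (i) on iterating from $E^{d_0}=E$; and the requirement $\ell_i \in \cP_{E,0}$ will give (ii) and (iii).

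At stage $i$, apply Lemma~\ref{selmerbound} to $E^{d_{i-1}}$ and $q_i \in \cP_{E^{d_{i-1}},2}$: after dividing by a square we obtain $c \in \Q_{q_i}^\times$ with $v_{q_i}(c) = 1$ along which a quadratic twist of $E^{d_{i-1}}$ does not increase $\dimtwo\big(\res_{q_i}(\Sel_2^{q_i}(E^{d_{i-1}}/\Q)) \cap (\text{local condition at } q_i)\big)$, hence, by the exact sequence relating $\Sel_2$ to its strict and relaxed variants at $q_i$ in Theorem~\ref{ptd}, does not increase $\dimtwo\Sel_2$. I realize $c$ by the global twist $e_i := q_i\ell_i$ (so $d_i = d_{i-1}e_i$): this prescribes $\ell_i$ modulo $q_i$, and I require in addition that $e_i$ be a square in $\Q_v^\times$ for every $v \in \{\infty, 2\} \cup \{\text{odd primes of bad reduction of } E\}$, so that by Remark~\ref{onlydependon} the local conditions of $E^{d_{i-1}}$ and $E^{d_i}$ agree at these places. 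They also agree at $\ell_i$: since $\ell_i \in \cP_{E,0}$ we have $E(\Q_{\ell_i})[2] = E[2]^{\Frob_{\ell_i}} = 0$, so $H^1(\Q_{\ell_i}, E[2]) = 0$ by local duality and the local Euler characteristic formula, and $\ell_i$ then carries the zero local condition before and after twisting. Thus $e_i$ changes local conditions only at $q_i$, and Lemma~\ref{selmerbound} together with Lemma~\ref{ram} gives $\dimtwo\Sel_2(E^{d_i}/\Q) \le \dimtwo\Sel_2(E^{d_{i-1}}/\Q)$.

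It remains to find, at each stage, infinitely many primes $\ell_i \in \cP_{E,0}$ satisfying the finitely many conditions imposed — a prescribed class modulo $q_i$, a prescribed value of $\left(\tfrac{\ell_i}{p}\right)$ for each odd bad prime $p$, a prescribed class modulo $8$, positivity — i.e.\ $\ell_i$ in a nonempty union of residue classes modulo a fixed modulus $M_i$, coprime to $M_i$. Now $\ell_i \in \cP_{E,0}$ means $\Frob_{\ell_i}$ has order $3$ in $G := \Gal(\Q(E[2])/\Q)$, which is $A_3$ or $S_3$ because $E$ has no rational $2$-torsion; in the $S_3$ case this is the conjunction of ``$\ell_i$ splits in $\Q(\sqrt{\Delta})$'' and a condition cut out by the degree-$3$ extension $\Q(E[2])/\Q(\sqrt{\Delta})$, which is linearly disjoint from $\Q(\zeta_{M_i})$ over $\Q(\sqrt\Delta)$ (since $\Q(\sqrt{\Delta}) \subseteq \Q(\zeta_{M_i})$ and the only abelian subextensions of $\Q(E[2])/\Q$ are $\Q$ and $\Q(\sqrt\Delta)$). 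The first half is automatic: applying the Hilbert symbol product formula to $(\Delta, d_i)$ and using that $d_i > 0$, $d_i \equiv 1 \pmod 8$, $d_i$ is a quadratic residue at every odd bad prime, and every prime dividing $d_i/\ell_i$ lies in $\cP_{E,0}\cup\cP_{E,2}$ and so splits in $\Q(\sqrt\Delta)$, one finds $(\Delta,d_i)_v = 1$ for all $v \ne \ell_i$, hence $(\Delta,d_i)_{\ell_i} = 1$, i.e.\ $\Delta$ is a square in $\Q_{\ell_i}$. A parallel analysis in the $A_3$ case — where either $\Q(E[2]) \cap \Q(\zeta_{M_i}) = \Q$ (if the conductor of $\Q(E[2])$ does not divide $M_i$), or else the residual order-$3$ condition on $\ell_i$ modulo that conductor is not pinned down by the index-$2$ conditions already imposed at the relevant primes — gives compatibility there too. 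Chebotarev's theorem in $\Q(E[2], \zeta_{M_i})$ then produces infinitely many admissible $\ell_i$, avoiding the finitely many primes used so far, which completes the induction.

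The main obstacle is precisely this last paragraph: $\Q(E[2])$ is ramified only at $2$ and the bad primes of $E$, all of which divide $M_i$, so it is generally not linearly disjoint from $\Q(\zeta_{M_i})$, and the congruence constraints forced by the Selmer computation compete on the same primes with the Chebotarev condition defining $\cP_{E,0}$. Disentangling them — via the product formula for the abelian part and a genericity count for the residual conditions, treating $G = S_3$ and $G = A_3$ separately — is the substance of the argument; everything else is formal manipulation of Selmer groups through Theorem~\ref{ptd}, Lemma~\ref{ram}, and Lemma~\ref{selmerbound}.
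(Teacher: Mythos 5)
Your overall strategy coincides with the paper's: induct on the $q_i$, use Lemma~\ref{selmerbound} to pick a ramified local class at $q_i$ that does not increase the Selmer rank, and realize it globally by a twist supported at $q_i$ and one auxiliary prime $\ell_i\in\cP_{E,0}$ (where $H^1(\Q_{\ell_i},E[2])=0$, so no local condition is disturbed there). The paper outsources precisely this global realization step to \cite[Lemma 6.4]{Yu1}; you have in effect reproved that lemma inline, including the genuinely necessary compatibility check, via the Hilbert symbol product formula, between the congruence conditions at the ramified primes of $\Q(E[2])$ and the Chebotarev condition defining $\cP_{E,0}$.

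There is one concrete gap. At stage $i$ you require $e_i=q_i\ell_i$ to be a square only at $\infty$, $2$, and the odd primes of bad reduction of $E$, and conclude that $e_i$ changes local conditions only at $q_i$. But you must also control the primes $q_1,\dots,q_{i-1}$ dividing $d_{i-1}$: these are good primes for $E$, so they are not on your list, yet $E^{d_{i-1}}$ has additive reduction there, $v_{q_j}(d_{i-1})$ is odd, and $H^1(\Q_{q_j},E[2])$ is $4$-dimensional. If $e_i$ is a nonsquare unit at $q_j$, then $d_{i-1}$ and $d_i=d_{i-1}e_i$ lie in the two distinct ramified classes of $\Q_{q_j}^\times/(\Q_{q_j}^\times)^2$, and by the very results invoked in Lemma~\ref{selmerbound} (\cite[Lemma 3.9 and Proposition 3.10]{Yu3}) the images $\alpha_{q_j}(d_{i-1})$ and $\alpha_{q_j}(d_i)$ are in general different. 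So the local condition can also move at $q_1,\dots,q_{i-1}$, and the inductive inequality $\dimtwo\left(\Sel_2(E^{d_i}/\Q)\right)\le\dimtwo\left(\Sel_2(E^{d_{i-1}}/\Q)\right)$ is not justified as written. The fix is routine: additionally require $\ell_i$ (equivalently $e_i$) to be a square modulo each $q_j$ with $j<i$. Since each $q_j$ splits completely in $\Q(E[2])$, in particular $\Delta$ is a square in $\Q_{q_j}$, these extra congruences contribute trivial factors $(\Delta,d_i)_{q_j}=1$ to your product-formula computation and do not affect the Chebotarev step; the $\ell_j$ with $j<i$ need no attention since $H^1(\Q_{\ell_j},E[2])=0$.
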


\begin{proof}
By induction, it is enough to find $d$ so that (i),(ii) hold and $\cP_{E,2}$ contains only one prime that divides $d$. This is a consequence of Lemma \ref{selmerbound} and \cite[Lemma 6.4\footnote{Typo : the conclusion of \cite[Lemma 6.4]{Yu1} should be `$\Sel_2(J^\chi/K) = \Sel_2(J, \psi_\ell)$'    not `$\Sel_2(J^\chi/K) = \Sel_2(J, \chi_\ell)$'}]{Yu1} (and its proof). 
\end{proof}

\begin{lem}
\label{additive}
Suppose $E/\Q$ has a good reduction at an odd prime $q$. If $q | d$, then $E^d$ has additive reduction at $q$. 
\end{lem}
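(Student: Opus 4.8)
The plan is to argue entirely locally at $q$, over $\Q_q$, by writing down an explicit Weierstrass model of the twist and reducing it modulo $q$. First I would fix a Weierstrass model of $E$ over $\Z_q$ that is minimal at $q$. Since $q$ is odd, $2$ is a unit in $\Z_q$, so after completing the square this model may be taken in the form $y^2 = x^3 + a_2 x^2 + a_4 x + a_6$ with $a_2,a_4,a_6\in\Z_q$; and because $E$ has good reduction at $q$, its discriminant $\Delta$ is a unit in $\Z_q$ (the minimal discriminant has valuation $0$ at $q$ precisely when the reduction there is good).

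Next I would pass to the twist. Over $\Q_q$ the curve $E^{d}$ depends only on the class of $d$ in $\Q_q^\times/(\Q_q^\times)^2$, and since $d$ is squarefree with $q\mid d$ we have $v_q(d)=1$; a model of $E^{d}$ over $\Z_q$ is then
\[
y^2 = x^3 + a_2 d\,x^2 + a_4 d^2 x + a_6 d^3,
\]
whose discriminant is $d^{6}\Delta$, of valuation $6v_q(d)=6$ at $q$. By the standard fact that a Weierstrass model over $\Z_q$ with $v_q(\mathrm{disc})<12$ is minimal, this model is already minimal at $q$; hence $v_q(\Delta_{\min}(E^{d}))=6\neq 0$, so $E^{d}$ has bad reduction at $q$. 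Reducing this minimal model modulo $q$ and using $v_q(a_2 d),\,v_q(a_4 d^2),\,v_q(a_6 d^3)\geq 1$, the reduced curve is $y^2=x^3$, which has a cusp at the origin — so the reduction is additive, as claimed.

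The one point that genuinely needs care is minimality: a cuspidal reduction of a \emph{non}-minimal model would not by itself force additive reduction (for instance $y^2=x^3+q^{12}$ has good reduction), so I must verify that the exhibited twisted model is minimal, which is exactly what the inequality $6<12$ provides. As an independent cross-check one can give a Galois-theoretic argument avoiding explicit models: $j(E^{d})=j(E)$ is a $q$-adic integer since $E$ has good reduction at $q$, so $E^{d}$ has potentially good reduction at $q$ and therefore has good or additive reduction there; and for any prime $\ell\neq q$ one has $T_\ell(E^{d})\cong T_\ell(E)\otimes\chi$ as $G_{\Q_q}$-modules, where $\chi$ is the quadratic character cut out by $\Q_q(\sqrt d)/\Q_q$, which is ramified because $q\mid d$. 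Since $E$ has good reduction, inertia at $q$ acts trivially on $T_\ell(E)$ and hence acts on $T_\ell(E^{d})$ through $\chi\neq 1$, so by the criterion of N\'eron-Ogg-Shafarevich $E^{d}$ does not have good reduction at $q$; combined with the previous sentence, its reduction is additive.
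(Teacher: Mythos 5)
Your argument is correct, and it is essentially the standard computation that the paper invokes by simply citing Silverman: twist a minimal integral model at $q$, observe that the discriminant acquires valuation $6<12$ (so the twisted model is still minimal) while the reduction becomes cuspidal (equivalently, $v_q(c_4)>0$), which is exactly the criterion for additive reduction. You have merely written out in full what the paper leaves to the reference, and your N\'eron--Ogg--Shafarevich cross-check is a valid independent confirmation.
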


\begin{proof}
See, for example, \cite[Proposition $\RN{7}$.5.1(c)]{Silverman}.
\end{proof}
%%%%%%%%%%%%%%%%%%%%%%%%%%%%%%%%%%%%%%%%%%%%%%%%

\section{Proof of Theorem \ref{main}} 
Recall $K = \Q(\sqrt{D})$ for a squarefree integer $D$. First, we fix an integer $c$ such that 
$$\dimtwo\left(\Sel_2(E/\Q)\right) \le c.$$ 
By the argument of subsection 1.1, it is enough to find (infinitely many) $E^d$ such that (i),(ii), and (iii) in susbsection 1.1 are satisfied. Since $K \neq \Q(\sqrt{\Delta})$, We have that 
$$\Q(E[2]) \cap K = \Q.$$
Then the Chebotarev density theorem implies that for any $s>0$, we can choose $q_1, q_2, \ldots, q_s \in \cP_{E,2}$ so that $q_i$'s are inert in $K/\Q$.  In virtue of Proposition \ref{Ewithmanyaddred} and Lemma \ref{additive}, we may assume $E$ (by replacing it with some quadratic twist) satisfies
\begin{itemize}
\item
$|A_E| \gg 0$,
\item
$\dimtwo\left(\Sel_2(E/\Q)\right) \le c.$
\end{itemize}

 Let $S$ be a set of primes of $\Q$ containing $2$, primes where $E$ has bad reduction, and divisors of $D$. By \cite[Theorem]{HL} (for $E^D$) and a Kolyvagin's theorem \cite{Koly}, there are infinitely many squarefree odd integers $d$ satisfying
    \begin{itemize}
    \item
    $d$ has at most $4$ prime factors,
    \item
    $\rk(E^{Dd}(\Q)) = 0,$ and
    \item
    For all $q \in S$, $d \in (\Q_q^\times)^2$.   
    \end{itemize}
Then $$\rk(E^d(K)) = \rk(E^d(\Q)) + \rk(E^{dD}(\Q)) = \rk(E^d(\Q)),$$ which is (i) in subsection 1.1. Note that the last condition on the choice of $d$ implies that $E$ and $E^d$ have the same local conditions except at the (prime) divisors of $d$ and the archimedean place (by Remark \ref{onlydependon} and \cite[Lemma 2.6]{Yu2}). Then Corollary \ref{corofptd} proves that 
$$
\dimtwo\left(\Sel_2(E^d/\Q)\right) \le c + 10,
$$
so (ii) in subsection 1.1 holds. Also, by the choice of $d$, it is clear that $A_{E} \subseteq A_{E^d}$. Finally, (iii) in subsection 1.1 follows from Proposition \ref{SeloverK}.

\begin{rem}
In the proof of Theorem \ref{main}, we constructed $E^d$ with a large number of primes where $E^d$ has additive reduction. In particular, $d$ has many prime divisors. It would be interesting to know whether it is possible to find $E^d$ with a large (2-torsion part of) Shafarevich-Tate group over $K$ with a small number of prime divisors of $d$. 
\end{rem}

\begin{comment}
\section*{Acknowledgements}
\end{comment}

\bibliographystyle{abbrv}
\bibliography{Sharef}

\end{document}